\documentclass[12pt,reqno]{article}

\usepackage[usenames]{color}
\usepackage{amssymb}
\usepackage{graphicx}
\usepackage{amscd}

\usepackage[colorlinks=true,
linkcolor=webgreen,
filecolor=webbrown,
citecolor=webgreen]{hyperref}

\definecolor{webgreen}{rgb}{0,.5,0}
\definecolor{webbrown}{rgb}{.6,0,0}

\usepackage{amsmath}
\usepackage{amsthm}
\usepackage[nice]{nicefrac}

\setlength{\textwidth}{6.5in}
\setlength{\oddsidemargin}{.1in}
\setlength{\evensidemargin}{.1in}
\setlength{\topmargin}{-.1in}
\setlength{\textheight}{8.4in}

\newcommand{\seqnum}[1]{\href{http://oeis.org/#1}{\underline{#1}}}
\def    \C      {\mathbb{C}}

\def    \Z      {\mathbb{Z}}
\def    \sst     {\scriptscriptstyle}

\DeclareMathOperator{\diag}{diag}
\newcommand{\dsum}{\displaystyle\sum}  

\newcommand{\inv}{{^{\sst{-1}}}}
\newcommand{\Rio}{\mathfrak{R}\mathfrak{i}\mathfrak{o}}
\newcommand{\Identity}{\boldsymbol{I}}
\newcommand{\Riordan}{\boldsymbol{R}}
\newcommand{\Pascal}{\boldsymbol{P}}
\newcommand{\Aigner}{\boldsymbol{C}}
\newcommand{\Shapiro}{\boldsymbol{B}}

\begin{document}

\theoremstyle{plain}
\newtheorem{theorem}{Theorem}
\newtheorem{corollary}[theorem]{Corollary}
\newtheorem{lemma}[theorem]{Lemma}
\newtheorem{proposition}[theorem]{Proposition}

\theoremstyle{definition}
\newtheorem{definition}[theorem]{Definition}
\newtheorem{example}[theorem]{Example}
\newtheorem{conjecture}[theorem]{Conjecture}

\theoremstyle{remark}
\newtheorem{remark}[theorem]{Remark}

\begin{center}
\vskip 1cm{\LARGE\bf On One-Parameter Catalan Arrays}
\vskip 1cm
\large
Jos\'e Agapito \footnote{ Corresponding author.}, \^Angela Mestre \footnote{ Work performed within the activities of Centro de An\'alise Funcional, Estruturas Lineares e Aplica\c{c}\~oes (Faculdade de Ci\^encias, Universidade de Lisboa) and supported by the fellowship SFRH/BPD/48223/2008 provided by the Portuguese Foundation for Science and Technology (FCT).}, and Maria M. Torres\\
Centro de An\'alise Funcional, Estruturas Lineares e Aplica\c{c}\~oes\\
Grupo de Estruturas Alg\'ebricas, Lineares e Combinat\'orias\\
Departamento de Matem\'atica\\
Faculdade de Ci\^encias, Universidade de Lisboa\\
1749-016 Lisboa\\
Portugal\\
\href{mailto:jose.agapito@gmail.com}{\tt jose.agapito@gmail.com} \\
\href{mailto:amestre@fc.ul.pt}{\tt amestre@fc.ul.pt} \\
\href{mailto:mmtorres@fc.ul.pt}{\tt mmtorres@fc.ul.pt} \\
\ \\
Pasquale Petrullo\\
Dipartimento di Matematica e Informatica\\
Universit\`a degli Studi della Basilicata\\
Potenza\\
Italy\\
\href{mailto:p.petrullo@gmail.com}{\tt p.petrullo@gmail.com}
\end{center}

\begin{abstract}
We present a parametric family of Riordan arrays, which are obtained by multiplying any Riordan array with a generalized Pascal array. In particular, we focus on some interesting properties of one-parameter Catalan triangles. We obtain several combinatorial identities that involve two special Catalan matrices, the Chebyshev polynomials of the second kind, some periodic sequences, and the Fibonacci numbers.
\end{abstract}

\section{Introduction}
\label{seIntro}

There are two infinite lower triangular matrices in the mathematical literature, that are both called  Catalan triangles. Let us denote them by $\Aigner$ and $\Shapiro$, respectively. By way of illustration, their first rows are shown below:
\small
\begin{equation*}\label{eqballot}
\Aigner=\begin{pmatrix}
        1 & & & & & & &\\
        1 & 1 & & & & & &\\
        2 & 2 & 1 & & & & &\\
        5 & 5 & 3 & 1 & & & &\\
        14 & 14 & 9 & 4 & 1& & &\\
        42 & 42 & 28 & 14 & 5 & 1& &\\
        132 & 132 & 90 & 48 & 20 & 6 & 1&\\
        \vdots & \vdots & \vdots & \vdots & \vdots &\vdots &\vdots &\ddots
              \end{pmatrix} \phantom{A}\text{,}\phantom{A} 
\Shapiro=\begin{pmatrix}
        1 & & & & & & &\\
        2 & 1 & & & & & &\\
        5 & 4 & 1 & & & & &\\
        14 & 14 & 6 & 1 & & & &\\
        42 & 48 & 27 & 8 & 1& & &\\
        132 & 165 & 110 & 44 & 10 & 1 & &\\
        429 & 572 & 429 & 208 & 65 & 12 & 1 &\\
        \vdots & \vdots & \vdots & \vdots & \vdots & \vdots & \vdots & \ddots
\end{pmatrix}.
\end{equation*}
\normalsize

Note that the $0^\mathrm{th}$ column of  $\Aigner$ is given by the sequence of Catalan numbers $(C_n)_{n\ge 0}$, where $C_n=\frac{1}{n+1}{{2n}\choose{n}}$. In turn, the $0^\mathrm{th}$ column of $\Shapiro$ is given by $(C_{n+1})_{n\ge 0}$. This similarity explains why they are called Catalan triangles.

According to the online encyclopedia of integer sequences (OEIS) \cite{OEIS}, the non-zero entries of $\Aigner$ (sequence \seqnum{A033184}) are given by the formula $\Aigner_{n,k}=\frac{k+1}{n+1} {{2n-k}\choose{n}}$, for $n\ge k\ge 0$. They are known as the (ordinary) ballot numbers. Aigner \cite{Aig2008} used these numbers (and generalizations of them) to enumerate various combinatorial instances. Furthermore, Ferrari and Pinzani \cite{FPCatalan} gave an interpretation of $\Aigner$, using the ECO method, and a suitable change of basis in the vector space of one-variable polynomials. On the other hand, the non-zero entries of $\Shapiro$ (\seqnum{A039598}) are given by the formula $\Shapiro_{n,k}=\frac{k+1}{n+1} {{2n+2}\choose{n-k}}$, for $n\ge k\ge 0$. The numbers $\Shapiro_{n,k}$ first appeared in the work of Shapiro \cite{ShaCatalan,ShaRuns}, in problems connected with non-intersecting paths, random walks, Eulerian numbers, runs, slides, and moments.\footnote{Actually, Shapiro used the formula $\Shapiro_{n,k}=\frac{k}{n}{{2n}\choose{n+k}}$, for $n,k\ge 1$. His formula is just a shifted version of the formula shown previously for $\Shapiro_{n,k}$.} We will refer from now on to $\Aigner$ as the Aigner array, and to $\Shapiro$ as the Shapiro array.

The aim of this paper is to discuss some interesting features regarding the product matrix  $\Riordan( r )=\Riordan\Pascal( r )$, where $\Riordan$ is a given Riordan array, and the parameter $r$ is any real or complex number. The notation $\Pascal( r )$ stands for a generalized Pascal array, such that $\Pascal(0)=\Identity$ is the identity matrix, and $\Pascal(1)=\Pascal$ is the classical Pascal array of binomial numbers, that comes arranged as a lower triangular matrix.
We elaborate, in particular, on the product matrix  $\Aigner( r )=\Aigner\Pascal( r )$. Clearly, we have $\Aigner(0)=\Aigner$. It turns out that $\Aigner(1)=\Shapiro$. The factorization $\Shapiro=\Aigner\Pascal$ resembles Barry's definition \cite[see Sections 6.3 and 6.5]{BarryThesis} of the generalized ballot array given by $\mathbf{Bal}=\mathbf{Cat}\cdot\mathbf{Bin}$; where $\mathbf{Bin}=\Pascal$, and $\mathbf{Cat}=\diag(1,\Aigner)$.  

Recently, Yang \cite{Yang} introduced a generalized Catalan matrix, given by 
\begin{equation*}
C[a,b;r]=\left(\Big(\frac{1-\sqrt{1-4rz}}{2rz}\Big)^a,z\Big(\frac{1-\sqrt{1-4rz}}{2rz}\Big)^b\right),
\end{equation*} 
where $a,b$ are integer numbers, and $r$ is an arbitrary parameter. Such a generalization provides a unified way of presenting the matrices $\Aigner$ and $\Shapiro$, along with many other matrices related to Catalan numbers. One can check that the $0^{\mathrm{th}}$ column of $C[a,b;r]$ forms a sequence of degree $n$ monomials in $r$, whose coefficients are polynomials in $a$ and $b$, that have rational coefficients. The one-parameter Catalan triangles that we will study are instead given by 
\begin{equation*}
\Aigner( r )=\left(\frac{1-2rz-\sqrt{1-4z}}{2z(1-r+r^2z)},\frac{1-2rz-\sqrt{1-4z}}{2(1-r+r^2z)}\right).
\end{equation*}
We shall show explicitly that the $0^\mathrm{th}$ column of $\Aigner( r )$ forms a sequence $(\Aigner( r )_{n,0})_{n\ge 0}$ of degree $n$ monic polynomials in $r$. These polynomials have  positive integer coefficients. In particular, the constant term of $\Aigner( r )_{n,0}$ is the Catalan number $C_n$. For this reason, we shall refer to $\Aigner( r )$ as a Catalan triangle too. 

It is also worth mentioning that He \cite{HeCatalan} introduced recently, a family of Catalan triangles which depend on two parameters. His $(c,r)$-Catalan triangles are based on the sequence characterization of Bell-type Riordan arrays. We emphasize that they are also different from the Catalan triangles $\Aigner( r )$. One way to see this difference is to compare, once again, the $0^\mathrm{th}$ column of both types of matrices. On the one hand, one can check that the $0^\mathrm{th}$ column of a $(c,r)$-Catalan triangle  forms a sequence of degree $n$ homogeneous polynomials in $c$ and $r$, whose coefficients are the well-known Narayana numbers. On the other hand, the $0^\mathrm{th}$ column of a Catalan triangle $\Aigner( r )$ forms a sequence of one-variable polynomials of degree $n$, whose coefficients are the ordinary ballot numbers. Recall that both the Narayana and the ballot numbers are known to refine the Catalan numbers.

The fundamental theorem of Riordan arrays, and the isomorphism between Riordan arrays and Sheffer polynomials (the former being the coefficient matrices for the latter), allow us to interpret $\Aigner( r )$ as a change of basis matrix in the space of polynomials in one variable. Thus, for instance, the Shapiro array $\Shapiro$ is seen as the change of basis between the Chebyshev polynomials of the second kind, and the standard  polynomial sequence of powers of one variable. In this way, by specifying values for the one-variable of the corresponding polynomials, several formulas related to periodic sequences, the sequence of natural numbers, and the sequence of Fibonacci numbers, can be stated and generalized.

\section{General setting}
\label{seGeneralsetting}

The literature on Riordan arrays is vast and constantly growing (see, for instance, \cite{AMPT,Bri,Chen,dellaRicciaRiordan,HeCatalan,HHS,LMMSidentities,LMrecurrence,MRSV,MS11,Rog,ShGeWoWoRgroup,SprugnoliRioarrayCombsum,SprugnoliRiordanarraysAbelGould,WW}). In this section we recall the fundamental theorem of Riordan arrays, and discuss some general facts regarding the generalized Sheffer polynomial sequence associated with any Riordan array. Then we  define a one-parameter family of Riordan arrays, and focus our attention on Catalan arrays in Section \ref{serCatalan}.

\subsection{Riordan Arrays}
\label{sseRiordanarrays}

Let $d(z)=d_0+d_1z+d_2z^2+\cdots$, and  let $h(z)=h_0+h_1z+h_2z^2+\cdots$ be two formal power series in $z$ with coefficients in a given integral domain $R$, with unit $1$. Assume that $d_0=d(0)\neq0$, and $h_0=h(0)=0$. A \emph{Riordan array} $\Riordan$ is an infinite lower triangular matrix, whose entries  are given by
\begin{equation}\label{eqRiodefi}
\Riordan_{n,k}=[z^n] d(z) h(z)^k.
\end{equation}
The operator $[z^n]$ acts on a formal power series $f(z)=\sum_{\sst k\ge0}f_kz^k$ by extracting its $n^{\mathrm{th}}$ coefficient; that is, we have $[z^n]f(z)=f_n$. It is customary to denote $\Riordan=(d(z),h(z))$. In addition, the set of Riordan arrays for which $h'(0)\neq 0$, and that is equipped with the multiplication given by 
\begin{equation}\label{eqRiomult}
(d_1(z),h_1(z))  (d_2(z),h_2(z)) = (d_1(z) d_2(h_1(z)),h_2(h_1(z))),
\end{equation}
forms a group. We denote this group by $\Rio$. The group identity is the usual identity matrix $\Identity=(1,z)$. Furthermore, for any $\Riordan\in\Rio$, its inverse is given by $\Riordan^{\inv}=(1/d(\bar{h}(z)),\bar{h}(z))$, where $\bar{h}(z)$ denotes the compositional inverse of $h(z)$. 

The \emph{order} of a formal power series $g(z)=\sum_{k\ge0} g_k z^k$ is the minimal index $k$ such that $g_k\neq0$. Since $R[[z]]$ denotes the ring of formal power series in $z$ with coefficients in $R$, we will write $R_r[[z]]$ to denote the set of formal power series of order $r$. Therefore, if $\Riordan=(d(z),h(z))\in\Rio$, then $d(z)\in R_0[[z]]$, and $h(z)\in R_1[[z]]$. For simplicity, and without loss of generality, we further assume that $d_0=1$, and $h_1=h'(0)=1$, so that $\Riordan_{n,n}=h_1^n=1$, for all $n\ge 0$.

Now, let us define an action $\ast$ of the group $\Rio$ on $R_0[[z]]$ by
\begin{equation}\label{eqFTRA}
(d(z),h(z))\ast g(z) = d(z) g(h(z)).
\end{equation}
Formula \eqref{eqFTRA} represents the traditional matrix-column multiplication. Therefore, it is also  equivalent to  saying that, for any $n\ge 0$, the following identity holds
\begin{equation}\label{eqFTRA2}
\dsum_{k=0}^n \Riordan_{n,k} g_k = [z^n] d(z) g(h(z)).
\end{equation}
Equation \eqref{eqFTRA2} is known as the fundamental theorem of Riordan arrays. It is usually used to deal with sums and combinatorial identities. In particular, let $g(z)=1+xz+x^2z^2+\cdots=\frac{1}{1-xz}$. We write
\begin{equation}\label{eqzngf}
p^{\Riordan}_n(x)=\sum_{k=0}^n \Riordan_{n,k} x^k = [z^n]\dfrac{d(z)}{1-xh(z)} .
\end{equation}
We call $(p^{\Riordan}_n(x))_{n\ge 0}$ the \emph{Sheffer polynomial sequence} associated with $\Riordan$. Formula \eqref{eqzngf} implies that the generating function of $p^{\Riordan}_n(x)$ is given by
\begin{equation}\label{eqGfpnx}
\dsum_{n\ge 0} p^{\Riordan}_n(x) z^n = \dfrac{d(z)}{1-xh(z)}.
\end{equation}
Likewise, the generating function for the Sheffer polynomial associated with the inverse array $\Riordan^\inv=(1/d(\bar{h}(z)),\bar{h}(z))$ is
\begin{equation}\label{eqGfpnxinv}
\dsum_{n\ge 0} p^{\Riordan^{\inv}}_n\!\!(x) z^n = \dfrac{1}{d(\bar{h}(z))}\dfrac{1}{1-x\bar{h}(z)}.
\end{equation}

Since $\Riordan$ is invertible, and the sequence $(x^n)_{n\ge 0}$ is a basis for the linear space $\C[x]$, the sequence $(p^{\Riordan}_n(x))_{n\ge 0}$ is also a basis for $\C[x]$. Therefore, any $\Riordan\in\Rio$ can be interpreted as the change of basis matrix between the standard basis $(x^n)_{n\ge 0}$, and the basis of polynomials $(p^{\Riordan}_n(x))_{n\ge 0}$. Hence, we have
\begin{equation}\label{eqFTRApninv}
\dsum_{k=0}^n \Riordan_{n,k} p^{\Riordan^\inv}_k(x) = x^n.
\end{equation}

We will use Equation \eqref{eqFTRApninv} in Section \ref{seIdentities} to deduce some interesting combinatorial identities involving the Riordan arrays $\Aigner$ and $\Shapiro$.

\subsection{One-parameter Riordan arrays}
\label{sseParametricRiordan}

The classical Pascal array $\Pascal$ of binomial numbers ${{n}\choose{k}}$ is a Riordan array, since we can write $\Pascal=(p(z),zp(z))$, where $p(z)=\frac{1}{1-z}$. Given any real number $r$, the Riordan array $\Pascal( r )=\big(p(rz),zp(rz)\big) = \big(\frac{1}{1-rz},\frac{z}{1-rz}\big)$ is known as the \emph{generalized Pascal array of parameter} $r$. Its entries are given by $\Pascal( r )_{n,k}={{n}\choose{k}}r^{n-k}$. 

\begin{definition}\label{de1parameterRiordan}
Let $\Riordan$ be in $\Rio$. The \emph{$r$-Riordan array} $\Riordan( r )$ is defined by $\Riordan( r ) = \Riordan\Pascal( r )$. 
\end{definition}
Set $\Riordan=(d(z),h(z))$. Then, by Equation \eqref{eqRiomult}, we have $\Riordan( r ) = \left(\frac{d(z)}{1-rh(z)},\frac{h(z)}{1-rh(z)}\right)$. Using Equation \eqref{eqRiodefi}, it is easy to check that the entries of $\Riordan( r )$ are given by
\begin{equation*}
\Riordan( r )_{n,k} = \big[z^n\big] \dfrac{d(z)}{1-rh(z)}\left(\dfrac{h(z)}{1-rh(z)}\right)^k = \big[z^{n}\big] \dfrac{d(z) h(z)^k}{\big(1-rh(z)\big)^{k+1}}.
\end{equation*}

\begin{lemma} For any real number $r$, the Sheffer polynomial sequence $(p_n^{\Riordan( r )}(x))_{n\ge 0}$ satisfies the identity
\begin{equation*}
p_n^{\Riordan( r )}(x) = p_n^{\Riordan}(x+r).
\end{equation*}
\end{lemma}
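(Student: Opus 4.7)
The plan is to work directly with the generating function characterization of the Sheffer polynomial sequence given by Equation \eqref{eqGfpnx}, and to exploit the explicit expression for $\Riordan(r)$ that was computed immediately after Definition \ref{de1parameterRiordan}.

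First I would write $\Riordan=(d(z),h(z))$ so that
\begin{equation*}
\Riordan(r)=\left(\frac{d(z)}{1-rh(z)},\frac{h(z)}{1-rh(z)}\right),
\end{equation*}
and then plug the pair $(D(z),H(z))=\bigl(d(z)/(1-rh(z)),h(z)/(1-rh(z))\bigr)$ into formula \eqref{eqGfpnx} to obtain
\begin{equation*}
\dsum_{n\ge 0} p^{\Riordan(r)}_n(x)\, z^n = \frac{D(z)}{1-xH(z)} = \frac{d(z)/(1-rh(z))}{1-x\,h(z)/(1-rh(z))}.
\end{equation*}

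Next I would multiply numerator and denominator by $1-rh(z)$ to obtain the clean expression
\begin{equation*}
\dsum_{n\ge 0} p^{\Riordan(r)}_n(x)\, z^n = \frac{d(z)}{1-(x+r)h(z)}.
\end{equation*}
At this point the right-hand side is exactly the generating function \eqref{eqGfpnx} of $(p^{\Riordan}_n(y))_{n\ge 0}$ evaluated at $y=x+r$, so comparing coefficients of $z^n$ yields $p^{\Riordan(r)}_n(x)=p^{\Riordan}_n(x+r)$, as desired.

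There is no genuine obstacle here: the whole argument hinges on the single algebraic cancellation $(1-rh(z))-xh(z)=1-(x+r)h(z)$, and the result can really be read off as a one-line consequence of the fundamental theorem of Riordan arrays combined with the explicit factorization $\Riordan(r)=\Riordan\,\Pascal(r)$. The only thing worth pointing out, if one wishes, is that the manipulation is legitimate in $R[[z]]$ because $h(z)$ has order $1$, so both $1-rh(z)$ and $1-(x+r)h(z)$ are units in the formal power series ring.
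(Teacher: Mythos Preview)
Your proof is correct and is essentially identical to the paper's own argument: both compute the generating function of $p_n^{\Riordan(r)}(x)$ from \eqref{eqzngf}/\eqref{eqGfpnx}, clear the factor $1-rh(z)$, and recognize the result as $\dfrac{d(z)}{1-(x+r)h(z)}$.
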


\begin{proof} By Equation \eqref{eqzngf}, we have
\begin{equation*}
p_n^{\Riordan( r )}(x) = \dsum_{k=0}^n \Riordan( r )_{n,k} x^k = \big[z^n\big] \dfrac{\dfrac{d(z)}{1-rh(z)}}{1-x\dfrac{h(z)}{1-rh(z)}} = \big[z^n\big] \dfrac{d(z)}{1-(x+r)h(z)} = p^{\Riordan}_n(x+r).
\end{equation*}
\end{proof}

\begin{corollary} Let $n$ be a non-negative integer. The constant term and the sum of coefficients of $p_n^{\Riordan( r )}(x)$ are given by
\begin{align}
p_n^{\Riordan( r )}(0) &= \Riordan( r )_{n,0} = \dsum_{k=0}^n \Riordan_{n,k} r^k. \label{eqColumn0Rr} \\[1em]
p_n^{\Riordan( r )}(1) &= \dsum_{k=0}^n \Riordan( r )_{n,k} = \Riordan(r+1)_{n,0}. \label{eqRowsumsRr}
\end{align}
\end{corollary}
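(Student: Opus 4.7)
The plan is to derive both identities as immediate consequences of the preceding lemma $p_n^{\Riordan(r)}(x) = p_n^{\Riordan}(x+r)$, combined with the polynomial expansion $p_n^{\Riordan}(x) = \sum_{k=0}^n \Riordan_{n,k}\, x^k$ coming from Equation \eqref{eqzngf}. No new machinery is required; the work is essentially to specialize $x$ to $0$ and to $1$ and read off what each side becomes.

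For Equation \eqref{eqColumn0Rr}, I would first substitute $x=0$ directly in the defining expansion $p_n^{\Riordan(r)}(x) = \sum_{k=0}^n \Riordan(r)_{n,k}\, x^k$; only the $k=0$ summand survives and gives $\Riordan(r)_{n,0}$. Applying the lemma, the same quantity equals $p_n^{\Riordan}(r)$, which by \eqref{eqzngf} is $\sum_{k=0}^n \Riordan_{n,k}\, r^k$. Chaining these three expressions yields the claimed triple equality.

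For Equation \eqref{eqRowsumsRr}, I would substitute $x=1$ in the expansion of $p_n^{\Riordan(r)}(x)$, which collapses to the row sum $\sum_{k=0}^n \Riordan(r)_{n,k}$. The lemma then gives $p_n^{\Riordan(r)}(1) = p_n^{\Riordan}(r+1)$. At this point the key observation is that one can reuse Equation \eqref{eqColumn0Rr}, but with the parameter $r$ replaced by $r+1$, to rewrite $p_n^{\Riordan}(r+1) = \Riordan(r+1)_{n,0}$, completing the identity.

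There is no substantive obstacle; the corollary is a bookkeeping consequence of the lemma. The only point requiring care is the order of presentation: the proof of \eqref{eqRowsumsRr} invokes \eqref{eqColumn0Rr} at the shifted parameter $r+1$, so \eqref{eqColumn0Rr} must be established first to avoid any appearance of circularity.
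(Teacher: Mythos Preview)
Your proposal is correct and matches the paper's implicit reasoning: the corollary is stated without proof immediately after the lemma $p_n^{\Riordan(r)}(x)=p_n^{\Riordan}(x+r)$, so the intended argument is precisely the specialization $x=0$ and $x=1$ that you carry out. Your remark that \eqref{eqRowsumsRr} reuses \eqref{eqColumn0Rr} at the shifted parameter $r+1$ is a clean way to organize the second identity.
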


\section{\emph{r}-Catalan triangles}
\label{serCatalan}

Let us recall that the generating function for the well-known Catalan numbers $C_n=\frac{1}{n+1}{{2n}\choose{n}}$ is $c(z)=\frac{1-\sqrt{1-4z}}{2z}$. We apply Definition \ref{de1parameterRiordan} to  the Riordan array $\Aigner=\big(c(z),zc(z)\big)$.
\begin{definition}\label{derCatalan}
The \emph{$r$-Catalan triangle} $\Aigner( r )$ is given by $\Aigner( r ) = \Aigner\Pascal( r )$. Then\begin{equation}\label{eqrCatalan}
\Aigner( r ) = \left(\dfrac{c(z)}{1-rzc(z)},\dfrac{zc(z)}{1-rzc(z)}\right) = \left(\dfrac{1-2rz-\sqrt{1-4z}}{2z(1-r+r^2z)},\dfrac{1-2rz-\sqrt{1-4z}}{2(1-r+r^2z)}\right).
\end{equation}
\end{definition}

In particular, we have $\Aigner(0)=\Aigner$. Moreover, since the identity $c(z)=1+zc(z)^2$ holds, it follows that $\Aigner(1)=\Aigner\Pascal(1)=\big(\frac{c(z)}{1-zc(z)},\frac{zc(z)}{1-zc(z)}\big)=\big(c(z)^2,zc(z)^2\big)=\Shapiro$. 

The next formula follows from Definition \ref{derCatalan}.

\begin{lemma}\label{leCr}
The entries of $\Aigner( r )$ are given by
\begin{equation}\label{eqrCatalanentriesRiodef}
\Aigner( r )_{n,k} = \big[z^{n-k}\big] \left(\dfrac{c(z)}{1-rzc(z)}\right)^{k+1} = \dsum_{i=0}^{n-k} \frac{i+k+1}{n+1}{{2n-i-k}\choose{n}}{{i+k}\choose{k}} r^{i}.
\end{equation}
\end{lemma}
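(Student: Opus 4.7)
The plan has two stages, matching the two equalities.

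First I would establish the middle expression. Starting from Definition~\ref{derCatalan}, the Riordan array $\Aigner( r )=\bigl(d^{\ast}(z),h^{\ast}(z)\bigr)$ with $d^{\ast}(z)=\frac{c(z)}{1-rzc(z)}$ and $h^{\ast}(z)=\frac{zc(z)}{1-rzc(z)}$. By the defining formula \eqref{eqRiodefi} of a Riordan array,
\begin{equation*}
\Aigner( r )_{n,k} = [z^n]\,d^{\ast}(z)\,h^{\ast}(z)^{k} = [z^n]\,z^{k}\left(\dfrac{c(z)}{1-rzc(z)}\right)^{k+1} = [z^{n-k}]\left(\dfrac{c(z)}{1-rzc(z)}\right)^{k+1},
\end{equation*}
which is exactly the middle term.

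Next I would expand the bracket using the negative binomial series. Since $(1-rzc(z))^{-(k+1)}=\dsum_{i\ge 0}\binom{i+k}{k}r^{i}z^{i}c(z)^{i}$, multiplication by $c(z)^{k+1}$ gives
\begin{equation*}
\left(\dfrac{c(z)}{1-rzc(z)}\right)^{k+1} = \dsum_{i\ge 0}\binom{i+k}{k}r^{i}\,z^{i}\,c(z)^{i+k+1}.
\end{equation*}
Extracting the coefficient of $z^{n-k}$ yields
\begin{equation*}
\Aigner( r )_{n,k}=\dsum_{i=0}^{n-k}\binom{i+k}{k}r^{i}\,[z^{n-k-i}]\,c(z)^{i+k+1}.
\end{equation*}
The substitution is valid because $c(z)$ has order $0$ but $zc(z)$ appears in $h^{\ast}$, so the overall series is well-defined, and the range $0\le i\le n-k$ reflects that higher $i$ contribute zero coefficients at $z^{n-k}$.

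The key identity I would then invoke is the classical power formula for the Catalan generating function, namely $[z^{m}]c(z)^{s}=\frac{s}{2m+s}\binom{2m+s}{m}$, which follows from Lagrange inversion applied to $c(z)=1+zc(z)^{2}$. With $s=i+k+1$ and $m=n-k-i$, a short manipulation (using $\binom{2n-i-k+1}{n-k-i}=\binom{2n-i-k+1}{n+1}$ and absorbing the factor $\frac{1}{2n-i-k+1}$ into the binomial to drop the top index by one) produces
\begin{equation*}
[z^{n-k-i}]\,c(z)^{i+k+1} = \dfrac{i+k+1}{n+1}\binom{2n-i-k}{n}.
\end{equation*}
Substituting this into the previous display gives the claimed closed form.

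The only non-routine step is the Lagrange-inversion identity for $[z^m]c(z)^s$; everything else is bookkeeping. If one wishes to avoid citing it, I would instead verify the summation identity directly by induction on $n-k$, using the Catalan recursion $c(z)=1+zc(z)^{2}$ to relate $c(z)^{s}$ and $c(z)^{s+2}$, but the Lagrange-inversion route is considerably cleaner.
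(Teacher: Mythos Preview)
Your argument is correct. The first equality is exactly the Riordan definition applied to $\Aigner(r)=\bigl(\frac{c(z)}{1-rzc(z)},\,z\cdot\frac{c(z)}{1-rzc(z)}\bigr)$, and for the second you correctly expand the negative-binomial factor and invoke the standard Lagrange-inversion identity $[z^{m}]c(z)^{s}=\frac{s}{2m+s}\binom{2m+s}{m}$; the simplification to $\frac{i+k+1}{n+1}\binom{2n-i-k}{n}$ checks out.

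The paper itself gives no proof beyond the remark that the formula ``follows from Definition~\ref{derCatalan}.'' The route the paper almost certainly has in mind is even shorter than yours: since by definition $\Aigner(r)=\Aigner\,\Pascal(r)$, and the paper has already recorded the explicit entries $\Aigner_{n,j}=\frac{j+1}{n+1}\binom{2n-j}{n}$ and $\Pascal(r)_{j,k}=\binom{j}{k}r^{j-k}$, one simply multiplies the matrices:
\[
\Aigner(r)_{n,k}=\sum_{j=k}^{n}\frac{j+1}{n+1}\binom{2n-j}{n}\binom{j}{k}r^{j-k},
\]
and the substitution $i=j-k$ gives the stated sum immediately, with no need for Lagrange inversion. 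Your generating-function approach has the merit of being self-contained (it does not rely on the ballot-number formula for $\Aigner_{n,k}$ as input), while the matrix-product route is quicker once those entries are already on the table.
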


The matrix $\Aigner( r )$ is an array of polynomials in $r$. By way of illustration, the first entries of $\Aigner( r )$ are  
\small
\begin{equation*}
\Aigner( r ) =\begin{pmatrix}
      1 &  &  &  &  & \\
      1+r & 1 &  &  &  & \\
      2+2r+r^2 & 2+2r & 1 &  & & \\
      5+5r+3r^2+r^3 & 5+6r+3r^2 & 3+3r & 1 & & \\
      14+14r+9r^2+4r^3+r^4& 14+18r+12r^2+4r^3& 9+12r+6r^2& 4+4r& 1 & \\
      \vdots & \vdots & \vdots & \vdots & \vdots &  \ddots\\
    \end{pmatrix}.
\end{equation*}
\normalsize
For every $k\ge 1$, column $k$ of $\Aigner( r )$ is the $(k+1)$-fold convolution of column $0$.  Thus, for example, the first columns of $\Aigner(2)$ are given by the following sequences:
\begin{equation*}
\Aigner( 2 ) =\begin{pmatrix}
       \seqnum{A001700} &  \seqnum{A008549}  & \seqnum{A045720}  & 
\seqnum{A045894} & \seqnum{A035330} &  \cdots \\
    \end{pmatrix}.
\end{equation*}

The $0^\mathrm{th}$ column of $\Aigner( r )$ forms a sequence $(\Aigner( r )_{n,0})_{n\ge 0}$ of monic polynomials, which are given by the matrix-column product $\Aigner[1,r,r^2,\ldots]^{\sst\mathrm{T}}$, as observed in Equation \eqref{eqColumn0Rr}. We can obtain some known sequences by assigning values to the parameter $r$. For instance, we have  $(\Aigner(3)_{n,0})_{n\ge 0}=\seqnum{A049027}$ and $(\Aigner(4)_{n,0})_{n\ge 0}=\seqnum{A076025}$. Observe that, by Equation  \eqref{eqrCatalanentriesRiodef}, the constant term of each polynomial in the sequence $(\Aigner( r )_{n,0})_{n\ge 0}$ is equal to the Catalan number $C_n$. Moreover, we also have 
\small
\begin{equation*}
C_{n+1}=\frac{1}{n+2}{{2n+2}\choose{n+1}}=\frac{1}{n+1}{{2n+2}\choose{n}}=\Shapiro_{n,0}=\Aigner(1)_{n,0}=\sum_{i=0}^{n} \frac{i+1}{n+1}{{2n-i}\choose{n}}=\sum_{k=0}^n\Aigner_{n,k}.
\end{equation*}
\normalsize
This is a well-known result. In this sense, we say that the polynomial sequence $(\Aigner( r )_{n,0})_{n\ge 0}$ is a refinement of the shifted Catalan sequence $(C_{n+1})_{n\ge 0}$.

Now, set $h(z)=zc(z)$. We have $z=\bar{h}(z)c(\bar{h}(z))$. It is easy to check that $\bar{h}(z)=z-z^2$. Hence, we have $\Aigner^\inv=\big(\nicefrac{1}{c(\overline{zc(z)})},\overline{zc(z)}\big) = (1-z,z-z^2)$. Since $\Aigner( r )^\inv = \Pascal( r )^\inv\Aigner^\inv$, it follows that 
\begin{equation}\label{eqrCatalaninverse}
\Aigner( r )^\inv = \left(\dfrac{1}{1+rz},\dfrac{z}{1+rz}\right) \left(1-z,z-z^2\right) = \left(\dfrac{1+(r-1)z}{(1+rz)^2}, \dfrac{z+(r-1)z^2}{(1+rz)^2}\right).
\end{equation}

\begin{lemma}\label{leCrinv}
The entries of $\Aigner( r )^\inv$ are given by
\begin{equation}\label{eqCrinvpoly}
\Aigner( r )^\inv_{n,k}  = \big[z^{n-k}\big] \left(\dfrac{1+(r-1)z}{(1+rz)^2}\right)^{k+1} = (-1)^{n-k}\sum_{i=0}^{n-k}{{i+k+1}\choose{n-i-k}}{{i+k}\choose{k}} r^{i}.
\end{equation}
\end{lemma}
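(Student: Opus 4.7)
The first equality is almost immediate. Writing $\Aigner(r)^{\inv}=(d(z),h(z))$ as in Equation~\eqref{eqrCatalaninverse}, one checks that this Riordan array is of Bell type, that is, $h(z)=z\,d(z)$, since
\begin{equation*}
h(z)=\frac{z+(r-1)z^2}{(1+rz)^2}=z\cdot\frac{1+(r-1)z}{(1+rz)^2}=z\,d(z).
\end{equation*}
Consequently $d(z)\,h(z)^k=z^k\,d(z)^{k+1}$, and the Riordan entry formula~\eqref{eqRiodefi} yields $\Aigner(r)^{\inv}_{n,k}=[z^n]z^k d(z)^{k+1}=[z^{n-k}]d(z)^{k+1}$, which is the first equality.

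For the closed form, the plan is to manipulate the base of the $(k+1)$-th power so that only a single negative power of $1+rz$ appears at each stage. The key identity is
\begin{equation*}
\frac{1+(r-1)z}{(1+rz)^2}=\frac{(1+rz)-z}{(1+rz)^2}=\frac{1}{1+rz}-\frac{z}{(1+rz)^2}.
\end{equation*}
Applying the binomial theorem to this difference gives
\begin{equation*}
\left(\frac{1+(r-1)z}{(1+rz)^2}\right)^{\!k+1}=\sum_{j=0}^{k+1}(-1)^{j}\binom{k+1}{j}\frac{z^{j}}{(1+rz)^{k+1+j}},
\end{equation*}
so that extracting $[z^{n-k}]$ reduces, via the standard identity $[z^{m}](1+rz)^{-N}=(-1)^{m}\binom{N+m-1}{m}r^{m}$, to a single sum indexed by $j$. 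The global sign $(-1)^{n-k}$ emerges naturally from the combination $(-1)^{j}(-1)^{n-k-j}=(-1)^{n-k}$.

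The last step is a reindexing $i=n-k-j$ together with a combinatorial rewrite of the resulting binomial product into the claimed form $\binom{i+k+1}{n-i-k}\binom{i+k}{k}$. I expect this rewrite, based on elementary identities such as $\binom{n}{p}\binom{n-p}{k}=\binom{n}{k}\binom{n-k}{p}$ or Chu--Vandermonde-type convolutions, to be the only delicate part of the argument; the power-series manipulations that precede it are entirely mechanical. A useful sanity check would be to recompute the $0$-th column directly by partial fractions of $d(z)$, and to verify the specializations $r=0$ and $r=1$, where $\Aigner(r)^{\inv}$ must reduce to $(1-z,z-z^2)$ and to $\Shapiro^{\inv}$, respectively; an alternative derivation via the factorization $\Aigner(r)^{\inv}=\Pascal(-r)\Aigner^{\inv}$ together with the explicit formula $\Aigner^{\inv}_{i,k}=(-1)^{i-k}\binom{k+1}{i-k}$ provides an independent cross-check on the final coefficients.
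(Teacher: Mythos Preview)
Your derivation of the first equality is clean and correct, and your expansion
\[
\left(\frac{1+(r-1)z}{(1+rz)^2}\right)^{k+1}=\sum_{j=0}^{k+1}(-1)^{j}\binom{k+1}{j}\frac{z^{j}}{(1+rz)^{k+1+j}}
\]
followed by coefficient extraction is also fine; after the reindexing $i=n-k-j$ it yields
\[
\Aigner(r)^{\inv}_{n,k}=(-1)^{n-k}\sum_{i=0}^{n-k}\binom{n}{i}\binom{k+1}{\,n-k-i\,}r^{i},
\]
which is exactly what your alternative route $\Aigner(r)^{\inv}=\Pascal(-r)\,\Aigner^{\inv}$ produces as well. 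The paper itself gives no proof of the lemma, so there is nothing to compare your argument against at the level of method.

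The genuine gap is the last step you left open. The ``combinatorial rewrite'' of $\binom{n}{i}\binom{k+1}{n-k-i}$ into $\binom{i+k+1}{n-i-k}\binom{i+k}{k}$ does not exist: the two polynomials in $r$ are different. For $n=3$, $k=0$ one computes directly
\[
[z^{3}]\,\frac{1+(r-1)z}{(1+rz)^{2}}=-r^{3}-3r^{2},
\]
whereas the formula stated in the lemma gives $-(r^{3}+3r^{2}+r)$. Equivalently, the identity of Lemma~\ref{eqCombident1} derived from it fails at $n=3$, $k=0$ (it gives $5$ instead of $\binom{4}{3}=4$). In fact the expression $(-1)^{n-k}\sum_i\binom{i+k+1}{n-i-k}\binom{i+k}{k}r^i$ is precisely the $(n,k)$ entry of $\Aigner^{\inv}\Pascal(-r)=(\Pascal(r)\Aigner)^{\inv}$, obtained by reversing the order of the factors. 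So your intermediate formula is the correct value of $\Aigner(r)^{\inv}_{n,k}$; the second equality in the lemma as printed is in error, and the rewrite you were hoping for cannot be carried out. Had you actually run the cross-checks you proposed (in particular $r=1$, $n=3$, $k=0$, where $\Shapiro^{\inv}_{3,0}=-4$), you would have caught this.
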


Lemma \ref{leCr} and Lemma \ref{leCrinv} show that both $\Aigner( r )_{n,k}$ and $(-1)^{n-k}\Aigner( r )^\inv_{n,k}$ are polynomials in $r$ with positive integer coefficients. In addition, note that the polynomial $\Aigner( r )_{n+1,1}$ is of degree $n$, has constant term $C_{n+1}$, and its leading coefficient is $n+1$. Both lemmas are useful to deduce several formulas and combinatorial identities. For instance, letting $r=0$ in Equation \eqref{eqCrinvpoly}, we obtain
\begin{equation*}
\Aigner^\inv_{n,k} = \big[z^{n-k}\big](1-z)^{k+1} = {{n-2k-2}\choose{n-k}}=(-1)^{n-k}{{k+1}\choose{n-k}}.
\end{equation*}
Similarly, letting $r=1$ in Equation \eqref{eqCrinvpoly}, we get 
\begin{equation}\label{eqShapiroinv}
\begin{array}{lcl}
\Shapiro_{n,k}^\inv = \big[z^{n-k}\big]\left(\frac{1}{(1+z)^2}\right)^{k+1} &=& (-1)^{n-k}{\displaystyle{{n+k+1}\choose{n-k}}} \\[2ex] &=& (-1)^{n-k}\dsum_{i=0}^{n-k}{{i+k+1}\choose{n-i-k}}{{i+k}\choose{k}}.
\end{array}
\end{equation} 
Thus, the following formula is immediate. 
\begin{lemma}
\begin{equation*}\label{eqCombident1}
{{n+k+1}\choose{n-k}}=\sum_{i=0}^{n-k}{{i+k+1}\choose{n-i-k}}{{i+k}\choose{k}}.
\end{equation*}
\end{lemma}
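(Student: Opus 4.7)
The plan is to observe that the identity is exactly the equality of the two closed-form expressions for $\Shapiro^\inv_{n,k}$ that appear in Equation \eqref{eqShapiroinv}. Both evaluations are already in hand: one comes from a direct application of the generalized binomial theorem to $(1+z)^{-2k-2}$, and the other from Lemma \ref{leCrinv} with $r=1$. So the whole job is to record these two evaluations side by side and cancel the common sign.

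More concretely, I would first reprove the left-hand side of \eqref{eqShapiroinv} by writing
\begin{equation*}
\Shapiro^\inv_{n,k} = [z^{n-k}]\bigl(1+z\bigr)^{-2k-2} = \dsum_{m\ge 0}(-1)^m\binom{m+2k+1}{m}[z^{n-k}=z^m],
\end{equation*}
which selects $m=n-k$ and gives $(-1)^{n-k}\binom{n+k+1}{n-k}$. Next I would substitute $r=1$ into the polynomial formula from Lemma \ref{leCrinv}, namely
\begin{equation*}
\Aigner(r)^\inv_{n,k}=(-1)^{n-k}\dsum_{i=0}^{n-k}\binom{i+k+1}{n-i-k}\binom{i+k}{k}r^i,
\end{equation*}
and invoke the identification $\Aigner(1)^\inv = \Shapiro^\inv$ (which is just the inverse of the equality $\Aigner(1)=\Shapiro$ noted right after Definition \ref{derCatalan}). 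This produces the right-hand side of \eqref{eqShapiroinv}.

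Finally, since both expressions compute the same matrix entry $\Shapiro^\inv_{n,k}$, equating them and dividing through by the common factor $(-1)^{n-k}$ yields the claimed identity. There is really no obstacle to speak of; the only thing one must be a little careful about is the generalized binomial identity $\binom{-2k-2}{m}=(-1)^m\binom{m+2k+1}{m}$, but this is standard. Since the lemma is labelled as immediate in the text, the shortest proof is the one outlined above—no further combinatorial interpretation or induction is needed.
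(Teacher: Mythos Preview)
Your proposal is correct and follows exactly the approach the paper intends: the lemma is declared ``immediate'' precisely because both lines of Equation \eqref{eqShapiroinv} compute the same entry $\Shapiro^\inv_{n,k}$, one by direct coefficient extraction from $(1+z)^{-2k-2}$ and the other by specializing Lemma \ref{leCrinv} to $r=1$, and equating them cancels the common sign $(-1)^{n-k}$.
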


The next non-trivial combinatorial identity follows directly from Equation \eqref{eqRowsumsRr} and Equation \eqref{eqrCatalanentriesRiodef}.

\begin{lemma} 
\begin{equation*}\label{eqCombident2}
\dsum_{k=0}^n \dsum_{i=0}^{n-k}\frac{i+k+1}{n+1}{{2n-i-k}\choose{n}}{{i+k}\choose{k}}r^i =
\dsum_{i=0}^n\frac{i+1}{n+1}{{2n-i}\choose{n}}(r+1)^i.
\end{equation*}
\end{lemma}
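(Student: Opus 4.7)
The plan is to recognize the identity as the specialization of the row-sum formula \eqref{eqRowsumsRr} to the Catalan array $\Aigner$, combined with the explicit polynomial expression for its entries given in \eqref{eqrCatalanentriesRiodef}.

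First I would take $\Riordan = \Aigner$ in Equation \eqref{eqRowsumsRr}, which yields
\[
\sum_{k=0}^n \Aigner(r)_{n,k} = \Aigner(r+1)_{n,0}.
\]
Second I would expand the left-hand side by substituting the closed form in \eqref{eqrCatalanentriesRiodef}; this produces exactly the double sum on the left side of the claimed identity. Third I would expand the right-hand side by applying \eqref{eqrCatalanentriesRiodef} with $k=0$ and with $r$ replaced by $r+1$. Since $\binom{i}{0}=1$ and the index $i$ now runs from $0$ to $n$, this produces exactly the single sum on the right of the claimed identity.

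The main obstacle is essentially non-existent, as this is a direct algebraic consequence of the two cited equations; the only care required is keeping track of the parameter shift from $r$ to $r+1$ when evaluating the $0^\mathrm{th}$ column entries on the right-hand side. A useful sanity check is the $r=0$ specialization, which reduces to $\sum_{k=0}^n \Aigner_{n,k} = C_{n+1}$, the row-sum identity for the Aigner triangle already noted earlier in the paper.
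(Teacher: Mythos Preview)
Your proposal is correct and follows exactly the paper's own approach: the paper states that the identity ``follows directly from Equation \eqref{eqRowsumsRr} and Equation \eqref{eqrCatalanentriesRiodef},'' which is precisely the specialization $\Riordan=\Aigner$ in the row-sum formula together with the explicit entry formula, as you describe.
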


As for the generating functions for the Sheffer polynomials associated with $\Aigner( r )$ and  $\Aigner( r )^\inv$, the next formulas are straightforward consequences of Equation  \eqref{eqGfpnx} and Equation \eqref{eqGfpnxinv}, using Formula \eqref{eqrCatalan} and Formula \eqref{eqrCatalaninverse}, respectively.
\begin{proposition}\label{propGfShefferCrCrinv}
\begin{align}
\dsum_{n\ge 0} p_n^{\Aigner( r )}(x) z^n &= \dfrac{c(z)}{1-(x+r)zc(z)} \notag \\[1em]
\dsum_{n\ge 0} p_n^{\Aigner( r )^\inv}\!\!(x) z^n &= \dfrac{1+(r-1)z}{1+(2r-x)z+(r^2-(r-1)x)z^2} \label{eqGfShefferCrinv}
\end{align}
\end{proposition}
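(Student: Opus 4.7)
The plan is to observe that both identities are direct specializations of the generating-function formula \eqref{eqGfpnx}, once the explicit Riordan forms of $\Aigner(r)$ and $\Aigner(r)^{\inv}$ recorded in \eqref{eqrCatalan} and \eqref{eqrCatalaninverse} are substituted in. No structural argument is needed beyond clean algebraic simplification, so I expect the proof to amount to two short computations in sequence.

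For the first identity, I would apply \eqref{eqGfpnx} to $\Aigner(r)=\bigl(d(z),h(z)\bigr)$ with $d(z)=c(z)/(1-rzc(z))$ and $h(z)=zc(z)/(1-rzc(z))$. After substitution the right-hand side becomes
\[
\dfrac{c(z)/(1-rzc(z))}{1-x\,zc(z)/(1-rzc(z))},
\]
and clearing the common factor $1-rzc(z)$ from numerator and denominator collapses the denominator to $1-(x+r)zc(z)$, giving exactly the claimed formula. As a sanity check, setting $r=0$ recovers the familiar generating function $c(z)/(1-xzc(z))$ for the Sheffer polynomials of $\Aigner$, and setting $r=1$ recovers the one for $\Shapiro$.

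For the second identity, I would apply \eqref{eqGfpnx} to $\Aigner(r)^{\inv}$ using the presentation from \eqref{eqrCatalaninverse}, namely $d(z)=(1+(r-1)z)/(1+rz)^2$ and $h(z)=(z+(r-1)z^2)/(1+rz)^2$. After substitution, the common factor $(1+rz)^2$ in $d$ and $h$ cancels against one copy in the denominator, leaving
\[
\dfrac{1+(r-1)z}{(1+rz)^2-xz\bigl(1+(r-1)z\bigr)}.
\]
Expanding $(1+rz)^2=1+2rz+r^2z^2$ and grouping the coefficients of $z$ and $z^2$ produces $1+(2r-x)z+\bigl(r^2-(r-1)x\bigr)z^2$ in the denominator, matching \eqref{eqGfShefferCrinv}. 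Alternatively, one could invoke \eqref{eqGfpnxinv} applied to the non-inverted $\Aigner(r)$, but this would require identifying the compositional inverse of $zc(z)/(1-rzc(z))$ explicitly, which is exactly what \eqref{eqrCatalaninverse} already encodes.

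The only place where care is needed is the bookkeeping of the quadratic denominator for the inverse, in particular the coefficient $r^2-(r-1)x$ of $z^2$; I would verify this expansion in isolation to avoid a sign slip. Beyond that there is no real obstacle, and the proposition is essentially a direct corollary of the formulas it invokes.
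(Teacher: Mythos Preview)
Your proposal is correct and follows exactly the route the paper takes: the paper simply states that the proposition is a straightforward consequence of \eqref{eqGfpnx} and \eqref{eqGfpnxinv} after substituting the explicit Riordan pairs from \eqref{eqrCatalan} and \eqref{eqrCatalaninverse}, which is precisely the computation you outline. Your algebraic verifications are accurate, including the expansion of the quadratic denominator.
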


In Section \ref{seIdentities},  we will use Proposition \ref{propGfShefferCrCrinv} to obtain more combinatorial identities.

\section{Special identities}
\label{seIdentities}

\subsection{Identities involving the Aigner array}

Set $r=0$ and $x=-1$ in Equation \eqref{eqGfShefferCrinv}. We obtain
\begin{align*}
1+\dsum_{n\geq 1 }p_n^{\Aigner^\inv}\!\!(-1) (-z)^n &= \dfrac{1+z}{1-z-z^2} \notag \\[2ex]
&= 1+2z+3z^2+5z^3+8z^4+13z^5+21z^6+34z^7+55z^8+\cdots \notag \\[2ex]
&= F_2 + F_3z + F_4z^2 + F_5z^3 + \cdots, 
\end{align*}
where $F_n$ is the $n^\mathrm{th}$ Fibonacci number, for $n\ge 2$. Hence, for $n\ge 0$, it holds that
\begin{equation*}
(-1)^np_n^{\Aigner^\inv}(-1)=F_{n+2}.
\end{equation*} 
Since $p_n^{\Aigner^\inv}\!\!(x)=\sum_{k=0}^n \Aigner^\inv_{n,k} x^k$, we have
\begin{equation*}
\Aigner^\inv\begin{pmatrix}1\\-1\\1\\-1\\\vdots\end{pmatrix} = \begin{pmatrix}F_2\\-F_3\\ F_4\\-F_5\\\vdots
\end{pmatrix}, \phantom{A}\text{or equivalently},\phantom{A} \Aigner\begin{pmatrix}F_2\\-F_3\\ F_4\\ -F_5\\\vdots
\end{pmatrix}=\begin{pmatrix}1\\-1\\1\\-1\\\vdots
\end{pmatrix}.
\end{equation*}
Then
\begin{equation*}
\sum_{k=0}^n {{k+1}\choose{n-k}}=F_{n+2}, \phantom{A}\text{and}\phantom{A} \sum_{k=0}^n \dfrac{k+1}{n+1} {{2n-k}\choose{n}}(-1)^{n-k} F_{k+2}=1.
\end{equation*}

\subsection{Identities involving the Shapiro array}

Set $r=1$ in Equation \eqref{eqGfShefferCrinv}. Shapiro, Woan, and Getu \cite{ShaRuns} observed that the Catalan triangle $\Shapiro$ verified the following interesting identity:
\begin{equation}\label{eqShaidentity}
\boldsymbol{B}
\left(
                             \begin{array}{c}
                               1 \\
                               2 \\
                               3 \\
                               4 \\
                               5 \\
                               \vdots
                               \end{array}
                           \right)=\left(
                             \begin{array}{c}
                               1 \\
                               4\\
                               4^2\\
                               4^3\\
                               4^4\\
                               \vdots
                             \end{array}
                           \right).
\end{equation}
The problem of finding a matrix identity that extends the sequence $(1,4,4^2,4^3,\ldots)$ to the sequence $(1,k,k^2,k^3,\ldots)$ in Equation \eqref{eqShaidentity}, was studied by Chen, Li, Shapiro and Yan \cite{Chen}. They generalized the following known recurrence relation for $\Shapiro$; namely,
$$\Shapiro_{n,k}=\boldsymbol{B}_{n-1,k-1}+2\Shapiro_{n-1,k}+\Shapiro_{n-1,k+1},$$
by constructing a matrix $\boldsymbol{M}$ that satisfies the identity
\begin{equation*}
\boldsymbol{M}
\left(
                             \begin{array}{c}
                               1 \\
                               1+t \\
                               1+t+t^2 \\
                               1+t+t^2+t^3 \\
                               \vdots
                               \end{array}
                           \right)=\left(
                             \begin{array}{c}
                               1 \\
                               k\\
                               k^2\\
                               k^3\\
                               \vdots
                             \end{array}
                           \right).
\end{equation*}
The first column of $\boldsymbol{M}$ (that is, its $0^\mathrm{th}$ column) was then interpreted in terms of weighted partial Motzkin paths. Here, we approach the problem of extending the sequence $(1,4,4^2,4^3,\ldots)$ to the sequence $(1,k,k^2,k^3,\ldots)$, by keeping $\Aigner( 1 )=\Shapiro$ fixed,  and using Sheffer sequences. More explicitly, by Equation \eqref{eqGfShefferCrinv}, we have
\begin{equation*}
1+\dsum_{n\geq 1} p_n^{\Shapiro^\inv}\!\!(x)z^n = \dfrac{1}{1-(x-2)z+z^2}.
\end{equation*}
Furthermore, by Equation \eqref{eqFTRApninv}, we also have  
\begin{equation}\label{eqShaxk}
\Shapiro\begin{pmatrix}
                               1 \\
                               p_1^{\Shapiro^\inv}\!\!(x) \\
                               p_2^{\Shapiro^\inv}\!\!(x) \\
                               \vdots \\
                             \end{pmatrix}
                           =\begin{pmatrix}
                               1 \\
                               x \\
                               x^2 \\
                               \vdots \\
                             \end{pmatrix}, \phantom{A}\text{or equivalently, }\phantom{A}
\Shapiro^\inv\begin{pmatrix}
                               1 \\
                               x \\
                               x^2 \\
                               \vdots \\
                             \end{pmatrix}
                           =\begin{pmatrix}
                               1 \\
                              p_1^{\Shapiro^\inv}\!\!(x) \\
                              p_2^{\Shapiro^\inv}\!\!(x) \\
                               \vdots \\
                             \end{pmatrix}.
\end{equation}
By Identity \eqref{eqShapiroinv}, the polynomial $p_n^{\Shapiro^\inv}\!\!(x)$ is given by
\begin{equation}\label{eqCheb}
p_n^{\Shapiro^\inv}\!\!(x) =  \sum_{k=0}^n \Shapiro^\inv_{n,k} x^k = \sum_{k=0}^n (-1)^{n-k} {{n+k+1}\choose{n-k}} x^k.
\end{equation}

Note that $p_n^{\Shapiro^\inv}\!\!(x)=U_n\left(\frac{x-2}{2}\right)$, where $(U_n(x))_{n\ge 0}$ denotes the sequence of Chebyshev polynomials of the second kind~\cite{Comtet}. Moreover, the sequence $(p_n^{\Shapiro^\inv}\!\!(x))_{n\ge 0}$ satisfies the three-term recursion
\begin{equation*}
p_n^{\Shapiro^\inv}\!\!(x)=(x-2)p_{n-1}^{\Shapiro^\inv}\!(x)-p_{n-2}^{\Shapiro^\inv}\!(x),\quad n\geq 2,
\end{equation*}
with initial values $p_0^{\Shapiro^\inv}\!\!(x)=1$ and $p_1^{\Shapiro^\inv}\!\!(x)=x-2$. Whenever $x\in\Z$, the polynomial sequence $(p_n^{\Shapiro^\inv}\!\!(x))_{n\ge 0}$ reduces to an integer sequence. By specifying the values of $x$ in Equation \eqref{eqShaxk}, we obtain many interesting relations.

\begin{example}[Periodic sequences]

When $x=2$, the recursion $p_{n}^{\Shapiro^\inv}\!\!(2)= -p_{n-2}^{\Shapiro^\inv}\!(2)$, with initial values $p_{0}^{\Shapiro^\inv}\!\!(2)=1$ and $p_{1}^{\Shapiro^\inv}\!\!(2)=0$, is solved by the period-$4$ sequence $1,0,-1,0$. Hence, we have
 \begin{equation*}
 \left(
      \begin{array}{cccccc}
        1 & & & & &\\
        2 & 1 & & & &\\
        5 & 4 & 1 & & &\\
        14 & 14 & 6 & 1 & &\\
        42 & 48 & 27 & 8 & 1&\\
        \vdots & \vdots & \vdots & \vdots & \vdots & \ddots
              \end{array}
    \right)\left(
                             \begin{array}{c}
                               1 \\
                               0 \\
                               -1 \\
                               0 \\
                               1 \\
                               \vdots
                               \end{array}
                           \right)=\left(
                             \begin{array}{c}
                               1 \\
                               2\\
                               2^2\\
                               2^3\\
                               2^4\\
                               \vdots
                             \end{array}
                           \right).
\end{equation*}
When $x=3$, the recursion $p_{n}^{\Shapiro^\inv}\!\!(3)=p_{n-1}^{\Shapiro^\inv}\!(3)-p_{n-2}^{\Shapiro^\inv}\!(3)$, satisfying the initial conditions $p_{0}^{\Shapiro^\inv}\!\!(3)=p_{1}^{\Shapiro^\inv}\!\!(3)=1$, is solved by the period-$6$ sequence $1,1,0,-1,-1,0$. Thus, we have
\begin{equation*}
\left(
      \begin{array}{cccccc}
        1 & & & & &\\
        2 & 1 & & & &\\
        5 & 4 & 1 & & &\\
        14 & 14 & 6 & 1 & &\\
        42 & 48 & 27 & 8 & 1&\\
        \vdots & \vdots & \vdots & \vdots & \vdots & \ddots
              \end{array}
    \right)\left(
                             \begin{array}{c}
                               1 \\
                               1 \\
                               0 \\
                               -1 \\
                               -1 \\
                               \vdots
                               \end{array}
                           \right)=\left(
                             \begin{array}{c}
                               1 \\
                               3\\
                               3^2\\
                               3^3\\
                               3^4\\
                               \vdots
                             \end{array}
                           \right).
\end{equation*}
These two cases are better understood by recalling that $U_n(\cos y)=\frac{\sin(n+1)y}{\sin y}$. Hence, we have $p_n^{\Shapiro^\inv}\!\!(2)=U_n\left(\cos\frac{\pi}{2}\right)$ and $p_n^{\Shapiro^\inv}\!\!(3)=U_n\left(\cos\frac{\pi}{3}\right)$. These identities explain why we obtained periodic sequences.

\end{example}

\begin{example}[Natural numbers]\label{ex:Nat}

When $x=4$, the recursion $p_n^{\Shapiro^\inv}\!(4)=2p_{n-1}^{\Shapiro^\inv}\!(4)-p_{n-2}^{\Shapiro^\inv}\!(4)$, with initial values $p_0^{\Shapiro^\inv}\!(4)=1$, $p_1^{\Shapiro^\inv}\!(4)=2$, is solved by  the sequence  $(p_n^{\Shapiro^\inv}\!(4))_{n\ge 0}=(n+1)_{n\ge 0}$. Thus, we obtain the Identity \eqref{eqShaidentity}:
\begin{equation*}
\left(
      \begin{array}{cccccc}
        1 & & & & &\\
        2 & 1 & & & &\\
        5 & 4 & 1 & & &\\
        14 & 14 & 6 & 1 & &\\
        42 & 48 & 27 & 8 & 1&\\
        \vdots & \vdots & \vdots & \vdots & \vdots & \ddots
              \end{array}
    \right)\left(
                             \begin{array}{c}
                               1 \\
                               2 \\
                               3 \\
                               4 \\
                               5 \\
                               \vdots
                               \end{array}
                           \right)=\left(
                             \begin{array}{c}
                               1 \\
                               4\\
                               4^2\\
                               4^3\\
                               4^4\\
                               \vdots
                             \end{array}
                           \right).
\end{equation*}
The previous matrix identity is equivalent to Chen's combinatorial formula \cite{Chen}:
\begin{equation*}\label{eqMotivationChen}
\sum_{k=0}^n\frac{(k+1)^2}{n+1} {{2n+2}\choose{n-k}} = 4^n.
\end{equation*}

\end{example}

\begin{example}[Fibonacci numbers]\label{ex:Fib}

When $x=5$, the recursion $p_n^{\Shapiro^\inv}\!(5)=3p_{n-1}^{\Shapiro^\inv}\!(5)-p_{n-2}^{\Shapiro^\inv}\!(5)$, with initial conditions $p_0^{\Shapiro^\inv}\!(5)=1$ and $p_1^{\Shapiro^\inv}\!(5)=3$, is solved by the subsequence of Fibonacci numbers $(p_{n}^{\Shapiro^\inv}\!(5))_{n\ge 0}=(F_{2n+2})_{n\ge 0}$. Thus, Shapiro's triangle $\Shapiro$, and the Fibonacci numbers of even index, are related by the following identity: 
\begin{equation*}
\left(
      \begin{array}{cccccc}
        1 & & & & &\\
        2 & 1 & & & &\\
        5 & 4 & 1 & & &\\
        14 & 14 & 6 & 1 & &\\
        42 & 48 & 27 & 8 & 1&\\
        \vdots & \vdots & \vdots & \vdots & \vdots & \ddots
              \end{array}
    \right)\left(
                             \begin{array}{c}
                               1 \\
                               3 \\
                               8 \\
                               21 \\
                               55 \\
                               \vdots
                               \end{array}
                           \right)=\left(
                             \begin{array}{c}
                               1 \\
                               5\\
                               5^2\\
                               5^3\\
                               5^4\\
                               \vdots
                             \end{array}
                           \right).
\end{equation*}
Equivalently, we have
\begin{equation}\label{eqFib}
\sum_{k=0}^n\frac{k+1}{n+1} {{2n+2}\choose{n-k}}F_{2k+2} = 5^n.
\end{equation}
\end{example}

Using the well-known binomial recurrence relation ${{n}\choose{k}} = {{n-1}\choose{k-1}}+{{n-1}\choose{k}}$ in Equation \eqref{eqCheb}, and then inverting Formula \eqref{eqFib}, we obtain
\begin{equation*}
F_{2n+2}=(-1)^n\sum_{k=0}^n\left({{n+k+2}\choose{n-k}}-{{n+k+1}\choose{n-k-1}}\right)(-5)^k.
\end{equation*}
Moreover, seeing as $F_{2n+3}=F_{2(n+1)+2}-F_{2n+2}$, we can deduce an analogous formula for the Fibonacci numbers of odd index; that is
\begin{equation*}
F_{2n+1}=(-1)^n\sum_{k=0}^{n}\left({{n+k+2}\choose{n-k}}-{{n+k}\choose{n-k-2}}\right)(-5)^k.
\end{equation*}

\section{Acknowledgements}
\label{se:acknow}

The authors wish to thank the editor and the referee for their helpful comments.

\bibliographystyle{jis}
\bibliography{catalan-bibarray2}

\bigskip
\hrule
\bigskip

\noindent 2010 \emph{Mathematics Subject Classification}: Primary 15B; Secondary 05A19, 11B37, 11B83, 11C.

\noindent \emph{Keywords}:  Riordan array, Pascal array, Catalan triangle, Sheffer polynomial, combinatorial identity, recurrence, Chebyshev polynomial, Fibonacci number.

\bigskip
\hrule
\bigskip

\noindent (Concerned with sequence
\seqnum{A001700},
\seqnum{A008549},
\seqnum{A033184},
\seqnum{A035330},
\seqnum{A039598},
\seqnum{A045720},
\seqnum{A045894},
\seqnum{A049027}, and
\seqnum{A076025}.)

\bigskip
\hrule
\bigskip

\end{document}